\newcommand{\ym}[1]{}
\theoremstyle{plain}
\newtheorem*{theorem*}{Main Theorem}
\newtheorem{theorem}{Theorem}[section]
\newtheorem{proposition}[theorem]{Proposition}
\newtheorem{lemma}[theorem]{Lemma}
\newtheorem{corollary}[theorem]{Corollary}
\theoremstyle{definition}
\newtheorem{definition}[theorem]{Definition}
\newtheorem{example}[theorem]{Example}
\theoremstyle{remark}
\newtheorem{remark}[theorem]{Remark}
\newcommand{\ZZ}{\mathbf{Z}}
\newcommand{\cofib}{\operatorname{cofib}}
\newcommand{\QCoh}{\operatorname{D}_{\textnormal{qcoh}}}
\newcommand{\Coh}{\operatorname{D}_{\textnormal{coh}}^{\textnormal{b}}}
\newcommand{\Perf}{\operatorname{D}^{\textnormal{perf}}}
\newcommand{\CAlg}{\operatorname{CAlg}}
\newcommand{\Spec}{\operatorname{Spec}}
\newcommand{\unit}{\mathbf{1}}
\newcommand{\shv}[1]{\mathcal{#1}}
\newcommand{\cat}[1]{\mathcal{#1}}
\newcommand{\Cat}[1]{\mathsf{#1}}
\newcommand{\llangle}{\langle\!\langle}
\newcommand{\rrangle}{\rangle\!\rangle}
\let\autocite\cite
\title{Quasiexcellence implies strong generation}
\author{Ko Aoki}
\address{
    Department of Mathematics,
    Tokyo Institute of Technology, 2--12--1 \=Ookayama, Meguro-ku,
    Tokyo 152--8551, Japan
}
\email{aoki.k.an@m.titech.ac.jp}
\date{\today}
\begin{document}

\begin{abstract}
    We prove that
    the bounded derived category
    of coherent sheaves on a quasicompact separated
    quasiexcellent scheme of finite dimension
    has a strong generator in the sense
    of Bondal--Van~den~Bergh.
    This extends a recent result of Neeman
    and is new even in the affine case.
    The main ingredient includes
    Gabber's weak local uniformization theorem
    and the notions of boundedness and descendability
    of a morphism of schemes.
\end{abstract}

\maketitle

\section{Introduction}\label{s_intro}

In~\autocite{BondalVandenBergh03},
Bondal and Van~den~Bergh introduced
the notion of strong generator of a triangulated category.
It is useful because
under the existence of a strong generator and the properness assumption,
a certain appealing form of the Brown representability theorem holds;
see \autocite[Theorem~1.3]{BondalVandenBergh03} for the precise statement.

We wish to know
that many of the naturally arising triangulated categories
have strong generators.
First, they proved in
\autocite[Theorem~3.1.4]{BondalVandenBergh03}
that if $ X $
is a quasicompact separated scheme smooth over a field,
the bounded derived category of coherent sheaves
$ \Coh(X) $, which is equal to $ \Perf(X) $ in this case,
admits a strong generator.
Recently, in~\autocite{Neeman},
Neeman generalized this result to the case
where $ X $ is a separated noetherian scheme
essentially of finite type over
an excellent scheme of dimension $ \leq 2 $.

The main result of this paper is the following
further generalization of Neeman's result,
which we demonstrate in \cref{s_main}.

\begin{theorem*}\label{44910ebe3e}
    If $ X $ is
    a quasicompact separated quasiexcellent scheme
    of finite dimension,
    $ \Coh(X) $ has a strong generator.
\end{theorem*}

Neeman used
de~Jong's theorem on alterations to prove his result.
Our strategy is to rather use weak local uniformizations,
whose existence for a quasiexcellent scheme
is already known due to Gabber.
In order to do so, we should contemplate on
how h~covers (or alteration covers) and strong generators interact with each other.
We found the two notions of boundedness and descendability
of a morphism,
which we treat in \cref{s_bounded} and \cref{s_descendable}
respectively, useful when considering that problem.

\subsection*{Convention}

To simplify the exposition,
we always regard $ \QCoh $ and $ \Perf $
of a quasicompact quasiseparated scheme
and $ \Coh $ of a noetherian scheme
as stable $ \infty $-categories.
Moreover, all pullback, pushforward, and tensor product
functors in this paper mean the derived ones,
so we put neither $ \mathrm{L} $ nor~$ \mathrm{R} $
to indicate how they are derived.

\subsection*{Acknowledgments}

I would like to thank Shane Kelly for introducing~\autocite{Neeman} to me,
and Amnon Neeman and Michael Temkin for
answering several questions.
I also thank Amnon for providing helpful feedback on a draft.

\section{Basic definitions}\label{s_def}

We first review some basic notions.
Our notation and terminology may slightly differ
from the common ones.

\begin{definition}\label{6c6b549161}
    Let $ \cat{C} $ be a stable $ \infty $-category.
    We call a collection of objects of~$ \cat{C} $
    \emph{closed} if
    it is closed under
    finite coproducts and direct summands.
    For a collection $ S \subset \cat{C} $,
    let $ \langle S \rangle $ denote
    the smallest closed subcollection containing~$ S $.
    For two closed subcollections~$ S $,~$ T $,
    we let $ S \star T $ denote the smallest closed subcollection
    containing an object $ C $ such that
    there exists $ C' \in S $, $ C'' \in T $, and
    a cofiber sequence $ C' \to C \to C'' $.
\end{definition}

We often omit curly braces to simplify the notation;
for example, $ \langle C \rangle $ for an object~$ C $
means what should be denoted by $ \langle \{C\} \rangle $, to be exact.

\begin{remark}\label{9d532cb5b6}
    The operation~$ \star $ was considered for example
    in \autocite[Section~1.3]{BBD}, where they proved its associativity.
    (Note that $ \star $ differs from what was denoted by~$ * $ there
    in that we apply the closure operation.)
    Therefore, following the usual pattern,
    we write $ S^{\star n} $
    for the ``$ n $th power'' of a closed subcollection~$ S $ for $ n > 0 $
    and $ S^{\star 0} $ for the collection consisting of
    zero objects.
\end{remark}

\begin{definition}\label{91f78d7a51}
    An object $ C \in \cat{C} $ of
    a stable $ \infty $-category~$ \cat{C} $
    is called
    a \emph{strong generator} if
    there exists an integer $ n \geq 0 $
    such that the equality $ \langle \Sigma^i C \mid
    i \in \ZZ \rangle^{\star n} = \cat{C} $ holds.
\end{definition}

Then we introduce the following ``big'' variant:

\begin{definition}\label{a669e3218b}
    Let $ \cat{C} $ be a stable $ \infty $-category admitting small coproducts.
    We call a collection of objects of~$ \cat{C} $
    \emph{big closed} if
    it is closed under
    small coproducts and direct summands.
    For a collection $ S \subset \cat{C} $,
    let $ \llangle S \rrangle $ denote
    the smallest big closed subcollection containing~$ S $.
\end{definition}

Note that if $ S $ and $ T $ are big closed,
$ S \star T $ is also big closed.

\begin{example}[G.\,M.\,Kelly]\label{kelly}
    For a commutative ring~$ R $ of global dimension~$ n $,
    we have
    $ \llangle \Sigma^i R \mid i \in \ZZ \rrangle^{\star (n+1)}
    = \QCoh(\Spec R) $
    by using arguments made in~\autocite{Kelly65}.
    See \autocite[Section~8]{Christensen98} for a detailed account.
\end{example}

The following result, which was proven in \autocite[Section~2]{Neeman},
explains why we care about the big variant.

\begin{theorem}[Neeman]\label{neeman}
    Let $ X $ be a noetherian scheme.
    Suppose that an object $ F \in \Coh(X) $
    satisfies
    $ \llangle \Sigma^i F
    \mid i \in \ZZ \rrangle^{\star n} = \QCoh(X) $ for some integer $ n \geq 0 $.
    Then $ F $ is a strong generator of $ \Coh(X) $.
\end{theorem}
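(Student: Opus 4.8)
The plan is to prove the sharper statement that $ \langle \Sigma^i F \mid i \in \ZZ \rangle^{\star n} = \Coh(X) $, i.e.\ that the same $ n $ already works. The inclusion $ \langle \Sigma^i F \mid i \in \ZZ \rangle^{\star n} \subseteq \Coh(X) $ is immediate, since $ \Coh(X) $ contains $ F $ and is closed under shifts, finite coproducts, direct summands, and cofibers; so the entire content is the reverse inclusion. Fix $ M \in \Coh(X) $. By hypothesis $ M \in \QCoh(X) = \llangle \Sigma^i F \mid i \in \ZZ \rrangle^{\star n} $, and I must compress the a priori infinite coproducts occurring in a filtration witnessing this into finite ones without spending extra $ \star $-steps.

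The engine is a finiteness property of coherent objects, proved with the standard $ t $-structure: writing $ \QCoh(X)^{\ge t} $ for its connective part, if $ \{N_\alpha\} $ is a family with all $ N_\alpha \in \QCoh(X)^{\ge t} $ for one common $ t $, then for coherent $ M $ the canonical map $ \bigoplus_\alpha \Map(M, N_\alpha) \to \Map(M, \bigoplus_\alpha N_\alpha) $ is an equivalence. Locally $ M $ has a representative that is a bounded-above complex of finite free modules; mapping it into a uniformly bounded-below complex, only finitely many terms contribute, and the finite free terms make $ \Map $ commute with the coproduct. Globally this is computed by the finite \v{C}ech complex of an affine cover, which preserves the property. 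In short, coherent objects are compact against any uniformly bounded-below family, even though they are rarely compact in $ \QCoh(X) $.

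The conceptual heart is the case $ n = 1 $, where I claim a coherent retract $ M $ of a coproduct $ \bigoplus_\alpha \Sigma^{j_\alpha} F $ is already a retract of a finite subcoproduct. Write $ F \in \QCoh(X)^{\ge a_F} \cap \QCoh(X)^{\le b_F} $ and $ M \in \QCoh(X)^{\ge a} \cap \QCoh(X)^{\le b} $, with structure maps $ \iota, \rho $ satisfying $ \rho \iota = \mathrm{id}_M $. Orthogonality of the $ t $-structure forces the component of $ \iota $ into any summand with $ \Sigma^{j_\alpha} F \in \QCoh(X)^{\ge b+1} $ to vanish, and by the finiteness property the whole projection of $ \iota $ onto those summands vanishes; hence $ \iota $ factors through the summands with $ j_\alpha \ge a_F - b $. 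Dually $ \rho $ annihilates the summands with $ \Sigma^{j_\alpha} F \in \QCoh(X)^{\le a-1} $, i.e.\ with $ j_\alpha > b_F - a $. Therefore $ \mathrm{id}_M $ factors through the subcoproduct over the bounded window $ a_F - b \le j_\alpha \le b_F - a $, which is uniformly bounded below, and one more application of the finiteness property factors $ \iota $ through a finite subcoproduct, giving $ M \in \langle \Sigma^i F \mid i \in \ZZ \rangle $.

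Finally I would induct on $ n $. The $ \star $-filtration witnessing $ M \in \llangle \Sigma^i F \mid i \in \ZZ \rrangle^{\star n} $ has graded pieces that are coproducts of shifts of $ F $ ranging over all of $ \ZZ $, so these coproducts are not bounded below and the finiteness property does not yet apply. The plan is first to establish a window reduction: a coherent $ M $ of the above amplitude lying in $ \llangle \Sigma^i F \mid i \in \ZZ \rrangle^{\star n} $ already lies in $ \llangle \Sigma^j F \mid a_F - b \le j \le b_F - a \rrangle^{\star n} $, so that every coproduct in play becomes uniformly bounded below; the case $ n = 1 $ of this is exactly the two-sided squeeze above. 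Once the window is bounded below, the finiteness property supplies the factorizations through finite subcoproducts needed to compress each layer of the filtration, yielding $ M \in \langle \Sigma^i F \mid i \in \ZZ \rangle^{\star n} $ with the same $ n $, uniformly in $ M $. I expect the main obstacle to be the window reduction for $ n > 1 $: truncating the defining filtration away from $ \QCoh(X)^{> b} $ and $ \QCoh(X)^{< a} $ must be carried out so as to keep the graded pieces coproducts of shifts of $ F $ and without spending extra $ \star $-steps, and this is delicate precisely because the intermediate terms of the filtration are neither coherent nor bounded and so cannot be squeezed directly.
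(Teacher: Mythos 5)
There is a genuine gap, and it sits exactly where you flag it yourself. Note first that the paper does not prove this theorem: it is imported wholesale from Neeman's paper, and the passage from the ``big'' hypothesis $\llangle \Sigma^i F \mid i \in \ZZ \rrangle^{\star n} = \QCoh(X)$ to strong generation of $\Coh(X)$ is precisely the technical content of Neeman's Section~2. So your proposal is an attempt to reprove that result, and what you actually establish is only the case $n \leq 1$. That part is correct and is essentially the standard argument: the ``finiteness property'' (objects of $\Coh(X)$ are compact against families uniformly bounded below in the standard $t$-structure, via local resolutions by finite free modules and a finite \v{C}ech limit, which commutes with coproducts of spectra) is a true fact for noetherian $X$, and your two-sided orthogonality squeeze correctly shows that a coherent retract of $\bigoplus_\alpha \Sigma^{j_\alpha} F$ is a retract of a finite subcoproduct with shifts in a bounded window.

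For $n \geq 2$, however, the ``window reduction'' is only a plan, and it does not follow from the $n=1$ argument. The obstruction is concrete: membership of $M$ in $\llangle \Sigma^i F \mid i \in \ZZ \rrangle^{\star n}$ is witnessed by a filtration whose intermediate terms are neither coherent nor bounded, and cofiber sequences permit cancellation --- for instance any object $C$ sits in a cofiber sequence $\Sigma^{-1} C \to 0 \to C$, so even the zero object admits witnessing filtrations with arbitrarily unbounded graded pieces. Consequently the boundedness of $M$ imposes no degree restriction whatsoever on a given filtration, and the window cannot be found by pruning it; it must be rebuilt, e.g.\ by applying truncations $\tau^{\geq t}$, $\tau^{\leq t}$ to the whole filtration. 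Doing so turns the graded pieces into coproducts of shifts of \emph{truncations} of $F$ rather than of $F$ itself, and forces one to spend extra $\star$-steps and to enlarge the generator (the truncations are again coherent since $X$ is noetherian, but they need not lie in $\langle \Sigma^i F \mid i \in \ZZ \rangle$). Managing exactly this is the bulk of Neeman's proof. Relatedly, your insistence on the sharper statement that the \emph{same} $n$ works makes the problem strictly harder than what the theorem asks: the conclusion only requires some integer $m$, and the freedom to let $m$ grow and to modify the generator is precisely the room needed to absorb truncation errors. As it stands, your proposal proves the theorem only when $n \leq 1$.
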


\section{Boundedness}\label{s_bounded}

We introduce the notion of boundedness of a morphism between schemes.

\begin{definition}\label{a9fcbaa4d1}
    Let $ f \colon Y \to X $ be a morphism
    between noetherian schemes.
    It is called \emph{coherently bounded}
    if for every $ G \in \Coh(Y) $,
    there exists an object $ F \in \Coh(X) $
    and an integer $ n \geq 0 $ such that
    $ f_* G \in \llangle F \rrangle^{\star n} $ holds.
\end{definition}

In this section, we prove that
many morphisms are coherently bounded.

\begin{example}\label{671d42539c}
    Any proper morphism $ Y \to X $
    between noetherian schemes is coherently bounded
    since the direct image functor
    sends an object of $ \Coh(Y) $ into $ \Coh(X) $.
\end{example}

\begin{proposition}\label{632b179157}
    Any composition of
    two coherently bounded morphisms
    is coherently bounded.
\end{proposition}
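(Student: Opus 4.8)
The plan is to reduce the statement to two ingredients: a geometric one, that pushforward along a morphism of noetherian schemes is exact and preserves small coproducts, and a formal one, that any such functor is compatible with the operations $\llangle - \rrangle$ and $\star$. Write $g \colon Z \to Y$ and $f \colon Y \to X$ for the two given coherently bounded morphisms, so that $(f \circ g)_* = f_* \circ g_*$, and fix an object $G \in \Coh(Z)$.

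First I would unwind the two boundedness hypotheses in sequence. Coherent boundedness of $g$ yields $H \in \Coh(Y)$ and an integer $m \geq 0$ with $g_* G \in \llangle H \rrangle^{\star m}$, and then coherent boundedness of $f$ applied to this $H$ yields $F \in \Coh(X)$ and an integer $n \geq 0$ with $f_* H \in \llangle F \rrangle^{\star n}$. The goal is to transport the first membership through $f_*$ and fuse it with the second.

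The central point I would isolate as a lemma is the following: if $\phi \colon \cat{C} \to \cat{D}$ is an exact functor between stable $\infty$-categories admitting small coproducts that preserves small coproducts, then $\phi(\llangle H \rrangle^{\star k}) \subseteq \llangle \phi(H) \rrangle^{\star k}$ for every object $H$ and every $k \geq 0$. I would prove this by induction on $k$, the engine being the inclusion $\phi(S \star T) \subseteq S' \star T'$ whenever $\phi(S) \subseteq S'$ and $\phi(T) \subseteq T'$ with $S'$, $T'$ big closed. To check the latter, observe that the preimage $\{C \in \cat{C} : \phi(C) \in S' \star T'\}$ is big closed, since $\phi$ preserves small coproducts and direct summands (the latter automatically, as retracts are) and $S' \star T'$ is big closed; and it contains every $C$ sitting in a cofiber sequence $C' \to C \to C''$ with $C' \in S$, $C'' \in T$, since $\phi$ is exact. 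The base case $\phi(\llangle H \rrangle) \subseteq \llangle \phi(H) \rrangle$ is the same preimage argument without $\star$, and the inductive step uses $\llangle H \rrangle^{\star(k+1)} = \llangle H \rrangle^{\star k} \star \llangle H \rrangle$.

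Granting this, I would conclude as follows. Since $Z$ and $Y$ are noetherian, the morphism $f$ is quasicompact and quasiseparated, so $f_* \colon \QCoh(Y) \to \QCoh(X)$ is exact and commutes with small coproducts, and the lemma gives $f_*(\llangle H \rrangle^{\star m}) \subseteq \llangle f_* H \rrangle^{\star m}$. Because $\llangle F \rrangle^{\star n}$ is big closed and contains $f_* H$, it contains $\llangle f_* H \rrangle$, whence by monotonicity and associativity of $\star$ one gets $\llangle f_* H \rrangle^{\star m} \subseteq (\llangle F \rrangle^{\star n})^{\star m} = \llangle F \rrangle^{\star nm}$. Chaining the inclusions, $(f \circ g)_* G = f_*(g_* G) \in \llangle F \rrangle^{\star nm}$ with $F \in \Coh(X)$, which is exactly coherent boundedness of $f \circ g$. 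The only genuinely geometric input is the coproduct-preservation of $f_*$, which I would cite; the remaining work is bookkeeping with the closure operations, so I expect the main obstacle to be nothing more than phrasing the functoriality lemma cleanly enough that the induction runs without friction.
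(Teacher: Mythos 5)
Your proof is correct and follows essentially the same route as the paper: unwind the two boundedness hypotheses, push the first membership through $f_*$, and fuse it with the second to land in $\llangle F \rrangle^{\star nm}$. The only difference is that you isolate and prove, as an explicit lemma, the inclusion $f_*(\llangle H \rrangle^{\star m}) \subseteq \llangle f_*H \rrangle^{\star m}$ (via exactness and coproduct-preservation of $f_*$), which the paper's one-line chain of inclusions treats as evident.
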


\begin{proof}
    Suppose that $ f \colon Y \to X $ and
    $ g \colon Z \to Y $ are
    coherently bounded morphisms between noetherian schemes.
    For $ H \in \Coh(Z) $, we can take an object $ G \in \Coh(Y) $
    and an integer $ n \geq 0 $ satisfying $ g_*H \in \llangle G \rrangle^{\star n} $.
    Similarly, we can take an object $ F \in \Coh(Y) $
    and an integer $ m \geq 0 $ satisfying $ f_*G \in \llangle F \rrangle^{\star m} $.
    Then we have $ (f \circ g)_* H \simeq f_*(g_*H)
    \in f_*(\llangle G \rrangle^{\star n})
    \subset \llangle f_*G \rrangle^{\star n} \subset \llangle F \rrangle^{\star mn} $.
\end{proof}

\begin{lemma}\label{1a3f3b9d8c}
    Any open immersion between
    separated noetherian schemes is coherently bounded.
\end{lemma}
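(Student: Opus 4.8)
The plan is to reduce by descent to a principal open immersion and to treat that case by a telescope construction, the passage to the big closure of \cref{a669e3218b} being exactly what makes the argument work. The essential case is a principal open immersion $ j \colon \Spec R_f \to \Spec R = X $ with $ R $ noetherian. Given $ G \in \Coh(\Spec R_f) $, I would first choose a coherent extension $ \bar G \in \Coh(X) $, which exists because $ X $ is noetherian. Since $ j $ is affine, $ j_* $ is restriction of scalars along $ R \to R_f $, so $ j_* G $ is the localization $ \bar G \otimes_R R_f $, which is the filtered colimit of the telescope $ \bar G \xrightarrow{f} \bar G \xrightarrow{f} \cdots $. Writing this colimit as a single cofiber,
\[
    j_* G \simeq \cofib\Bigl(\textstyle\bigoplus_{n \ge 0} \bar G \longrightarrow \bigoplus_{n \ge 0} \bar G\Bigr),
\]
and invoking that $ \llangle \bar G \rrangle $ is closed under small coproducts, both copies lie in $ \llangle \bar G \rrangle $; hence $ j_* G \in \llangle \bar G \rrangle^{\star 2} $. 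This is precisely where small coproducts, and therefore \cref{a669e3218b}, are indispensable.

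Next I would treat an arbitrary open immersion into an affine scheme. For open $ U \subset \Spec R $, write $ U = \bigcup_{i=1}^r D(f_i) $; as $ X = \Spec R $ is separated, every intersection $ D(f_{i_0}) \cap \dots \cap D(f_{i_p}) = D(f_{i_0} \cdots f_{i_p}) $ is again principal. The Čech complex for this cover exhibits $ j_* G $ as a finite totalization of the objects $ (j_I)_* (G|_{D(f_I)}) $, each coherently bounded by the principal case. Since a finite totalization in a stable $ \infty $-category is built from its terms by finitely many cofiber sequences, and $ \llangle F \rrangle^{\star a} \star \llangle F \rrangle^{\star b} \subset \llangle F \rrangle^{\star(a+b)} $, one may collect the finitely many generators and the finitely many shifts occurring in the differential into a single $ F \in \Coh(X) $, so that $ j_* G \in \llangle F \rrangle^{\star N} $ for some $ N $.

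Finally I would globalize the target, and I expect this to be the main obstacle. Let $ Z = X \setminus U $ with coherent ideal sheaf $ \mathcal{I} $, and let $ \bar G \in \Coh(X) $ extend $ G $; from the fiber sequence $ \Gamma_Z \bar G \to \bar G \to j_* G $ it suffices to bound $ \Gamma_Z \bar G \simeq \varinjlim_n \mathcal{H}om_{\mathcal{O}_X}(\mathcal{O}_X/\mathcal{I}^n, \bar G) $, a filtered colimit of coherent sheaves whose successive cofibers are $ \mathcal{H}om_{\mathcal{O}_X}(\mathcal{I}^n/\mathcal{I}^{n+1}, \bar G) $. These last are supported on the closed subscheme $ i \colon Z \hookrightarrow X $ and are pushed forward along it, so by \cref{671d42539c} each one is individually coherently bounded on $ X $. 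The genuinely delicate point—and the crux of the whole lemma—is to control \emph{all} of these colimit terms \emph{simultaneously} by a single coherent sheaf on $ X $: the telescope presentation of $ \varinjlim_n $ forces a uniform bound on the terms, which one cannot read off from coherent boundedness of each term separately. I would attempt to resolve this by noetherian induction on $ Z $, using that the cofibers descend along the proper map $ i $ to coherent sheaves on the strictly smaller $ Z $ and applying the same telescope idea there; but I anticipate that making this uniformity precise is where the real work lies, since it is essentially a finite-generation statement for the associated graded $ \bigoplus_n \mathcal{I}^n/\mathcal{I}^{n+1} $ fed through the coproduct-closure of \cref{a669e3218b}.
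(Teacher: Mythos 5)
Your first two steps are correct and amount to a complete proof when $X$ is affine: the telescope argument for a principal open (with the small repair that the cofiber sequence $\bigoplus_n \bar G \to \bigoplus_n \bar G \to j_*G$ exhibits $j_*G$ in $\llangle \bar G \rrangle \star \llangle \Sigma \bar G \rrangle$, so one should take $F = \bar G \oplus \Sigma \bar G$), followed by the finite \v{C}ech totalization, works as you describe. But your third step is not a technical loose end to be smoothed out: it is the entire content of the lemma, and your proposal leaves it open. The paper does not attempt to prove globalization from scratch. It first reduces to the structure sheaf: $G$ is a direct summand of $j^*F'$ for some $F' \in \Coh(X)$, so by the projection formula $j_*G$ is a direct summand of $F' \otimes j_*\shv{O}_U$, and tensoring with $F'$ preserves cofiber sequences, coproducts, and summands. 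It then cites \autocite[Theorem~6.2]{Neeman}, a substantial theorem providing a perfect complex $F''$ on $X$ and an integer $n$ with $j_*\shv{O}_U \in \llangle F'' \rrangle^{\star n}$, whence $j_*G \in \llangle F' \otimes F'' \rrangle^{\star n}$ with $F' \otimes F''$ coherent. That theorem of Neeman is exactly the globalization you are missing; its proof is a delicate induction over an affine cover with uniform bookkeeping, and nothing in your sketch substitutes for it.

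Concretely, two things go wrong in your step~3. First, the terms $\shv{H}\mkern-2mu om_{\shv{O}_X}(\shv{O}_X/\shv{I}^n, \bar G)$ (derived, per the paper's conventions) are in general \emph{not} objects of $\Coh(X)$: when $\shv{O}_X/\shv{I}^n$ has infinite Tor-dimension (e.g.\ $R = k[x,y]/(xy)$, $\shv{I} = (x)$), the Ext sheaves can be nonzero in infinitely many degrees for suitable $\bar G$, so these complexes are only bounded below with coherent cohomology; accordingly the successive cofibers are pushforwards from $Z$ of objects that need not lie in $\Coh(Z)$, and \cref{671d42539c} cannot be applied to them. Second, even where each term is coherent, the uniformity you flag is fatal as set up: \cref{671d42539c} produces a separate pair $(F_n, m_n)$ for each $n$, and there is no mechanism to merge infinitely many such pairs---$\bigoplus_n F_n$ is not coherent and $\sup_n m_n$ need not be finite---whereas placing the telescope's coproduct into some $\llangle F \rrangle^{\star m}$ requires a single coherent $F$ and a single $m$ serving all terms at once. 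So as written the proposal proves the lemma only for affine $X$; to complete it along the paper's lines, perform the projection-formula reduction to $j_*\shv{O}_U$ and then quote (or reprove) Neeman's theorem.
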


\begin{proof}
    Let $ j \colon U \to X $
    be an open immersion between separated noetherian schemes.
    Consider an object $ G \in \Coh(U) $.
    We wish to find an object $ F \in \Coh(X) $ and
    an integer~$ n \geq 0 $ satisfying
    $ j_*G \in \llangle F \rrangle^{\star n} $.
    Since $ G $ is a direct summand of
    some objects of the form $ j^*F' $ with $ F' \in \Coh(X) $,
    we may assume that $ G = j^*F' $ holds for some $ F' \in \Coh(X) $.
    According to \autocite[Theorem~6.2]{Neeman},
    there exists an object $ F'' \in \Perf(X) $
    and an integer $ n \geq 0 $ such that
    $ j_*\shv{O}_U \in \llangle F'' \rrangle^{\star n} $ holds.
    Then the pair consisting of $ F = F' \otimes F'' \in \Coh(X) $
    and this~$ n $ work since
    we have $ j_*G \simeq F' \otimes j_*\shv{O}_U
    \in F' \otimes \llangle F'' \rrangle^{\star n}
    \subset \llangle F' \otimes F'' \rrangle^{\star n}
    = \llangle F \rrangle^{\star n} $.
\end{proof}

\begin{theorem}\label{d310da1c37}
    Any morphism of finite type
    between separated noetherian schemes is coherently bounded.
\end{theorem}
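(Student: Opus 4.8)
The plan is to reduce the general case to the two special cases already established: proper morphisms, handled in \cref{671d42539c}, and open immersions, handled in \cref{1a3f3b9d8c}. The bridge between them is Nagata's compactification theorem.

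Let $ f \colon Y \to X $ be a morphism of finite type between separated noetherian schemes. First I would check that $ f $ is automatically separated. Indeed, since $ Y $ is a separated scheme, the structure morphism $ Y \to \Spec \ZZ $ is separated; as this factors as $ Y \xrightarrow{f} X \to \Spec \ZZ $, the cancellation property for separated morphisms shows that $ f $ itself is separated.

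Being a separated morphism of finite type with noetherian (hence quasicompact quasiseparated) base $ X $, the morphism $ f $ admits a Nagata compactification: a factorization $ Y \xrightarrow{j} \overline{Y} \xrightarrow{p} X $ with $ j $ an open immersion and $ p $ proper. The scheme $ \overline{Y} $ is again separated and noetherian, since $ p $ is proper---so separated and of finite type---and $ X $ is separated and noetherian. Thus $ j $ is an open immersion between separated noetherian schemes and $ p $ is a proper morphism between noetherian schemes.

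Now \cref{1a3f3b9d8c} shows that $ j $ is coherently bounded and \cref{671d42539c} shows that $ p $ is coherently bounded, so \cref{632b179157} gives that the composite $ f = p \circ j $ is coherently bounded, as desired. I expect the only genuine point requiring care to be the invocation of the compactification theorem, and specifically the verification of its separatedness hypothesis---which is exactly what the cancellation argument in the first step is designed to supply; everything else is the formal combination of results already in hand.
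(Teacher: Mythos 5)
Your proposal is correct and follows exactly the paper's argument: factor $f$ via Nagata's compactification theorem into an open immersion followed by a proper morphism, then combine \cref{1a3f3b9d8c}, \cref{671d42539c}, and \cref{632b179157}. The only difference is that you spell out the verification of the hypotheses of the compactification theorem (separatedness of $f$ by cancellation, and that $\overline{Y}$ is separated noetherian), which the paper leaves implicit.
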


\begin{proof}
    Nagata's compactification theorem says
    that such a morphism is factored into an open immersion
    followed by a proper morphism.
    So the desired result follows from
    \cref{671d42539c},
    \cref{632b179157}, and
    \cref{1a3f3b9d8c}.
\end{proof}

\section{Descendability}\label{s_descendable}

In this section,
the notion of descendability,
which is introduced
in \autocite[Section~3]{Mathew16},
and see how it is related to our problem.

We let $ \Cat{Pr}^{\textnormal{St}} $ denote the $ \infty $-category
whose objects are presentable $ \infty $-categories
and whose morphisms are colimit preserving functors.
We equip it with
the symmetric monoidal structure given in \autocite[Section~4.8.2]{HA}.

\begin{definition}[Mathew]\label{935da22422}
    Let $ \cat{C} $ be an object of $ \CAlg(\Cat{Pr}^{\textnormal{St}}) $;
    concretely, $ \cat{C} $ is a stable presentable $ \infty $-category
    equipped with a symmetric  monoidal structure whose
    tensor product operations preserve small colimits in each variable.
    A commutative algebra object $ A \in \CAlg(\cat{C}) $ is
    called \emph{descendable} if
    $ \cat{C} $ is the smallest thick tensor ideal containing~$ A $.
\end{definition}

\begin{example}[Bhatt--Scholze]\label{bs}
    Recall that a morphism $ f \colon Y \to X $
    between noetherian schemes is called
    an h~cover if it is of finite type and every base change
    is (topologically) submersive.
    According to \autocite[Proposition~11.25]{BhattScholze17},
    for such a morphism~$ f $,
    the direct image
    $ f_*\shv{O}_Y $ is descendable when viewed as
    a commutative algebra object of $ \QCoh(X) $.
\end{example}

The following characterization is standard;
see \autocite[Lemma~11.20]{BhattScholze17} for a proof.

\begin{proposition}\label{e552612dc4}
    For $ \cat{C} \in \CAlg(\Cat{Pr}^{\textnormal{St}}) $
    and $ A \in \CAlg(\cat{C}) $,
    let $ K $ denote the fiber of the canonical morphism
    $ \unit_{\cat{C}} \to A $.
    Then $ A $ is descendable
    if and only if $ K^{\otimes k} \to \unit_{\cat{C}} $
    is zero for some $ k \geq 0 $.
\end{proposition}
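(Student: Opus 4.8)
The plan is to reduce everything to an analysis of the tower of tensor powers of $ K $, built from the cofiber sequences obtained by tensoring $ K \xrightarrow{\alpha} \unit_{\cat{C}} \to A $ with the objects $ K^{\otimes j} $. Write $ \alpha \colon K \to \unit_{\cat{C}} $ for the fiber inclusion, and for each $ k \geq 0 $ and each $ X $ let $ \eta^k_X \colon K^{\otimes k} \otimes X \to X $ be the map induced by the $ k $-fold tensor power $ \alpha^{\otimes k} \colon K^{\otimes k} \to \unit_{\cat{C}} $; note that $ \eta^k_{\unit_{\cat{C}}} = \alpha^{\otimes k} $ is exactly the map in the statement. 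I will use throughout that $ A $ is descendable precisely when $ \unit_{\cat{C}} $ lies in the thick tensor ideal $ \langle A \rangle $ generated by $ A $, since if the unit lies in a tensor ideal then so does every $ X \simeq X \otimes \unit_{\cat{C}} $.

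For the implication ``$ \alpha^{\otimes k} $ null $ \Rightarrow $ descendable'', tensoring $ K \to \unit_{\cat{C}} \to A $ with $ K^{\otimes j} $ exhibits $ K^{\otimes j} \otimes A $ as the cofiber of $ K^{\otimes (j+1)} \to K^{\otimes j} $, so $ \alpha^{\otimes k} $ factors as a tower $ K^{\otimes k} \to \cdots \to K \to \unit_{\cat{C}} $ whose successive cofibers are the $ K^{\otimes j} \otimes A $ for $ 0 \leq j < k $. Each lies in $ \langle A \rangle $ because $ \langle A \rangle $ is a tensor ideal, and an iterated-cofiber (octahedral) argument then places $ \cofib(\alpha^{\otimes k}) $ in $ \langle A \rangle $, using that thick subcategories are closed under extensions. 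Since $ \alpha^{\otimes k} $ is null we have $ \cofib(\alpha^{\otimes k}) \simeq \unit_{\cat{C}} \oplus \Sigma K^{\otimes k} $, so $ \unit_{\cat{C}} $ is a retract of an object of $ \langle A \rangle $ and hence lies in $ \langle A \rangle $.

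The reverse implication is the substantive one. Here I would introduce the class
\[
    \cat{D} = \{\, X \in \cat{C} : \eta^k_X \text{ is null for some } k \geq 0 \,\}
\]
and prove it is a thick tensor ideal containing $ A $; descendability then forces $ \cat{D} = \cat{C} $, and taking $ X = \unit_{\cat{C}} $ yields that $ \alpha^{\otimes k} $ is null. That $ A \in \cat{D} $ is where the algebra structure enters: the unit $ \unit_{\cat{C}} \to A $ becomes a split monomorphism after tensoring with $ A $ (split by the multiplication $ A \otimes A \to A $), so its fiber map $ \eta^1_A \colon K \otimes A \to A $ is already null. Stability of $ \cat{D} $ under shifts, under tensoring with arbitrary objects, and under retracts is formal from naturality of $ \eta^k $ in $ X $ and the fact that tensoring preserves null-homotopies.

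The main obstacle is closure of $ \cat{D} $ under cofiber sequences, i.e.\ subadditivity of the nilpotence degree. Given a cofiber sequence $ X \to Y \to Z $ with $ \eta^a_X $ and $ \eta^b_Z $ null, I would first observe that $ (Y \to Z) \circ \eta^b_Y $ vanishes, by naturality of $ \eta^b $ together with nullity of $ \eta^b_Z $; hence $ \eta^b_Y $ factors through $ X = \fib(Y \to Z) $. Then the identity $ \eta^{a+b}_Y = \eta^a_Y \circ (\mathrm{id}_{K^{\otimes a}} \otimes \eta^b_Y) $, combined with naturality of $ \eta^a $, lets me route $ \eta^{a+b}_Y $ through $ \eta^a_X $, which is null, so $ \eta^{a+b}_Y $ is null and $ Y \in \cat{D} $. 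Keeping the bookkeeping of these naturality squares straight is the only delicate point; the remainder is routine manipulation of fiber and cofiber sequences in a stable $ \infty $-category.
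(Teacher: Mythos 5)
Your proof is correct, and it is essentially the argument behind the proof the paper points to (it cites Bhatt--Scholze, Lemma~11.20, rather than proving the proposition itself): nilpotence gives descendability via the tower with cofibers $K^{\otimes j} \otimes A$ plus the retract trick on $\cofib(\alpha^{\otimes k}) \simeq \unit_{\cat{C}} \oplus \Sigma K^{\otimes k}$, and the converse proceeds by showing that the objects with nilpotent $K$-action form a thick tensor ideal containing $A$, where $A$ itself qualifies because the multiplication splits $A \to A \otimes A$. Your subadditivity bookkeeping (factoring $\eta^b_Y$ through $\fib(Y \to Z)$ and then routing $\eta^{a+b}_Y$ through the null map $\eta^a_X$) is exactly the standard closure-under-extensions step, so there is nothing to correct.
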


The following observation explains
why this notion is useful for us;
it says that a descendable commutative algebra object
generates the given stable $ \infty $-category
using a finite number of steps in some sense.

\begin{proposition}\label{2405c8441e}
    Consider $ \cat{C} \in \CAlg(\Cat{Pr}^{\textnormal{St}}) $
    and $ A \in \CAlg(\cat{C}) $ and suppose
    that $ A $ is descendable.
    Then $ \langle A \otimes C \mid C \in \cat{C} \rangle^{\star k}
    = \cat{C} $ holds for some integer $ k \geq 0 $.
\end{proposition}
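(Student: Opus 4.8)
The plan is to resolve the unit object $ \unit_{\cat{C}} $ by a finite tower whose successive cofibers are manifestly of the form $ A \otimes (\X) $, and then to propagate this to every object by tensoring. Write $ \epsilon \colon K \to \unit_{\cat{C}} $ for the canonical map, so that the fiber sequence defining $ K $ is also a cofiber sequence and $ A \simeq \cofib(\epsilon) $. Put $ S = \langle A \otimes C \mid C \in \cat{C} \rangle $, so the goal is to produce $ k $ with $ S^{\star k} = \cat{C} $. Since $ A $ is descendable, \cref{e552612dc4} furnishes an integer $ k \geq 0 $ for which the $ k $-fold tensor power $ \epsilon^{\otimes k} \colon K^{\otimes k} \to \unit_{\cat{C}} $ is null.

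First I would build the tower $ K^{\otimes k} \to K^{\otimes (k-1)} \to \dots \to K^{\otimes 1} \to K^{\otimes 0} = \unit_{\cat{C}} $, whose $ i $th arrow is $ \epsilon \otimes \mathrm{id}_{K^{\otimes i}} $ and therefore has cofiber $ \cofib(\epsilon) \otimes K^{\otimes i} \simeq A \otimes K^{\otimes i} $, an object of $ S $, for $ i = 0, \dots, k-1 $. The composite of all the arrows is exactly $ \epsilon^{\otimes k} $. Iterating the octahedral axiom identifies $ \cofib(\epsilon^{\otimes k}) $ with an object carrying a length-$ k $ filtration whose associated graded pieces are the $ A \otimes K^{\otimes i} $; concretely, the partial cofibers $ c_j = \cofib(K^{\otimes k} \to K^{\otimes j}) $ fit into cofiber sequences $ c_{j+1} \to c_j \to A \otimes K^{\otimes j} $ with $ c_k = 0 $, so that $ c_0 = \cofib(\epsilon^{\otimes k}) \in S^{\star k} $.

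Next I would exploit nullity: because $ \epsilon^{\otimes k} \simeq 0 $, its target $ \unit_{\cat{C}} $ is a direct summand of $ \cofib(\epsilon^{\otimes k}) $. As $ S^{\star k} $ is by construction closed under direct summands, this gives $ \unit_{\cat{C}} \in S^{\star k} $. Finally, for an arbitrary $ C \in \cat{C} $ the functor $ (\X) \otimes C $ is exact and preserves finite coproducts and retracts, and it carries $ S $ into itself because $ (A \otimes C') \otimes C \simeq A \otimes (C' \otimes C) $; hence it carries $ S^{\star k} $ into $ S^{\star k} $. Applying it to $ \unit_{\cat{C}} $ yields $ C \simeq \unit_{\cat{C}} \otimes C \in S^{\star k} $, and since $ C $ was arbitrary we conclude $ S^{\star k} = \cat{C} $.

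The step I expect to be the main obstacle is the middle one: showing that the cofiber of a $ k $-fold composite is assembled from the $ k $ individual cofibers by $ k $ successive extensions with no spurious shifts. This is the standard but slightly delicate octahedral bookkeeping; recording the tower of partial cofibers $ c_j $ as above is the cleanest way I know to make it precise, and it has the pleasant feature that the graded pieces appear as $ A \otimes K^{\otimes i} $ on the nose, so that no closure of $ S $ under suspension is needed. Such closure does hold, since $ A \otimes \Sigma^j C \simeq \Sigma^j(A \otimes C) $, and could be invoked if one preferred a filtration with shifted graded pieces.
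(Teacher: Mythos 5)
Your proof is correct and takes essentially the same route as the paper: both invoke \cref{e552612dc4}, filter $\cofib(K^{\otimes k} \to \unit_{\cat{C}})$ through the tower of tensor powers of $K$ via octahedral bookkeeping so that the graded pieces $A \otimes K^{\otimes i}$ land in $S$, and then use nullity of $K^{\otimes k} \to \unit_{\cat{C}}$ to extract the unit as a direct summand. The only cosmetic differences are the direction of the induction (your partial cofibers $c_j$ versus the paper's $\cofib(K^{\otimes i} \to \unit_{\cat{C}})$) and that the paper tensors with the arbitrary object $C$ at every stage of the induction, whereas you first establish $\unit_{\cat{C}} \in S^{\star k}$ and tensor with $C$ once at the end.
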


\begin{proof}
    Let $ S $ denote the collection $ \langle A \otimes C
    \mid C \in \cat{C} \rangle $
    and $ K $ the fiber of the map $ \unit_{\cat{C}} \to A $.
    By \cref{e552612dc4}, there is an integer $ k \geq 0 $ such that
    the canonical morphism $ K^{\otimes k} \to \unit_{\cat{C}} $ is zero.
    Now we can check by induction on $ 0 \leq i \leq k $ that
    $ \cofib(K^{\otimes i} \to \unit_{\cat{C}}) \otimes C
    \in S^{\star i} $
    for every object $ C \in \cat{C} $;
    this is trivial when $ i = 0 $ and
    the inductive step follows from the cofiber sequence
    \begin{equation*}
        A \otimes
        (K^{\otimes i} \otimes C)
        \to \cofib(K^{\otimes (i+1)} \to \unit_{\cat{C}}) \otimes C
        \to \cofib(K^{\otimes i} \to \unit_{\cat{C}}) \otimes C.
    \end{equation*}
    Now we have $ S^{\star k} = \cat{C} $
    from the case when $ i = k $
    since $ \unit_{\cat{C}} $ is a direct summand of
    $ \cofib(K^{\otimes k} \xrightarrow{0} \unit_{\cat{C}})
    \simeq \Sigma K^{\otimes k} \oplus \unit_{\cat{C}} $.
\end{proof}

\begin{corollary}\label{060b8c5d40}
    Let $ L \colon \cat{C} \to \cat{D} $
    be a morphism in $ \CAlg(\Cat{Pr}^{\textnormal{St}}) $.
    Assume that the right adjoint~$ R $ of~$ L $
    preserves small colimits and
    the pair $ (L, R) $
    satisfies the projection formula;
    that is, the canonical morphism
    \begin{equation*}
        C \otimes R(D) \to R(L(C) \otimes D)
    \end{equation*}
    is an equivalence for any $ C \in \cat{C} $ and $ D \in \cat{D} $.

    Suppose that $ S \subset \cat{D} $ is a subcollection
    satisfying $ \llangle S \rrangle^{\star n} = \cat{D} $ for some integer $ n \geq 0 $.
    Then
    if $ R(\unit_{\cat{D}}) $
    is descendable, $ \llangle R(S) \rrangle^{\star m} = \cat{C} $
    holds for some integer $ m \geq 0 $,
    where $ R(S) $ denotes the (set theoretic) direct image of~$ S $ under~$ R $.
\end{corollary}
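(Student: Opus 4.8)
The plan is to reduce the whole statement to the descendability of the single algebra object $ A := R(\unit_{\cat{D}}) $ and then apply \cref{2405c8441e}. First I would record that $ A $ really is a commutative algebra object of $ \cat{C} $: since $ L $ is symmetric monoidal, its right adjoint $ R $ is lax symmetric monoidal, so $ A = R(\unit_{\cat{D}}) \in \CAlg(\cat{C}) $, and the hypothesis that $ R(\unit_{\cat{D}}) $ is descendable is exactly descendability of $ A $ in the sense of \cref{935da22422}. I would also note at the outset that $ R $ is exact: being a right adjoint it preserves limits, and by assumption it preserves small colimits, so in particular it preserves finite colimits and hence cofiber sequences. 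Thus $ R $ is compatible with all three operations --- small coproducts, retracts, and cofiber sequences --- out of which $ \llangle - \rrangle $ and $ \star $ are built.

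The technical core is the inclusion $ R(\llangle S \rrangle^{\star i}) \subseteq \llangle R(S) \rrangle^{\star i} $ for all $ i \geq 0 $, which I would prove by induction on $ i $. For the inductive step I would consider the preimage collection $ P = \{ D \in \cat{D} \mid R(D) \in \llangle R(S) \rrangle^{\star (i+1)} \} $ and check that it is big closed (using that $ R $ preserves small coproducts and retracts and that $ \llangle R(S) \rrangle^{\star (i+1)} $ is big closed) and that it contains every $ D $ sitting in a cofiber sequence $ D' \to D \to D'' $ with $ D' \in \llangle S \rrangle^{\star i} $ and $ D'' \in \llangle S \rrangle $: applying the exact functor $ R $ produces a cofiber sequence $ R(D') \to R(D) \to R(D'') $ whose outer terms lie in $ \llangle R(S) \rrangle^{\star i} $ (induction) and $ \llangle R(S) \rrangle $ (the case $ i = 1 $, itself a one-line preimage argument), so $ R(D) \in \llangle R(S) \rrangle^{\star (i+1)} $. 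By the minimality in the definition of $ \star $ this gives $ \llangle S \rrangle^{\star (i+1)} \subseteq P $, completing the induction. Applying the inclusion with $ i = n $ and the hypothesis $ \llangle S \rrangle^{\star n} = \cat{D} $ yields $ R(\cat{D}) \subseteq \llangle R(S) \rrangle^{\star n} $.

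Next I would feed in the projection formula. Taking $ D = \unit_{\cat{D}} $ in the displayed equivalence gives $ A \otimes C \simeq C \otimes R(\unit_{\cat{D}}) \simeq R(L(C)) $ for every $ C \in \cat{C} $, so the whole collection $ \{ A \otimes C \mid C \in \cat{C} \} $ lands inside $ R(\cat{D}) \subseteq \llangle R(S) \rrangle^{\star n} $. Since $ A $ is descendable, \cref{2405c8441e} provides an integer $ k \geq 0 $ with $ \langle A \otimes C \mid C \in \cat{C} \rangle^{\star k} = \cat{C} $. Because $ \llangle R(S) \rrangle^{\star n} $ is in particular closed, the smaller closure $ \langle A \otimes C \mid C \in \cat{C} \rangle $ is contained in it; raising to the $ k $th $ \star $-power and using monotonicity and associativity of $ \star $ (\cref{9d532cb5b6}), I obtain $ \cat{C} = \langle A \otimes C \mid C \in \cat{C} \rangle^{\star k} \subseteq (\llangle R(S) \rrangle^{\star n})^{\star k} = \llangle R(S) \rrangle^{\star nk} $. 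Taking $ m = nk $ finishes the argument.

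I expect the main obstacle to be the compatibility inclusion $ R(\llangle S \rrangle^{\star i}) \subseteq \llangle R(S) \rrangle^{\star i} $, since it is where the hypotheses that $ R $ preserves small colimits (and is therefore exact) are genuinely used; the projection-formula identification $ A \otimes C \simeq R(L(C)) $ is the conceptual bridge that converts the abstract descendability output of \cref{2405c8441e} into a statement about $ R(S) $. Everything after that is routine bookkeeping with the monotonicity and associativity of $ \star $.
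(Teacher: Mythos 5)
Your proof is correct and takes essentially the same route as the paper's: the projection formula identifies $A \otimes C \simeq R(L(C))$ as lying in $R(\cat{D})$, \cref{2405c8441e} supplies the exponent $k$, and the inclusion $R(\llangle S \rrangle^{\star n}) \subseteq \llangle R(S) \rrangle^{\star n}$ combines with it to yield $m = nk$. The only difference is that you prove by induction the compatibility inclusion that the paper asserts in a single line, which is a correct filling-in of detail rather than a different argument.
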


\begin{proof}
    By the projection formula,
    we see that
    $ R(\cat{D}) $ contains $ R(\unit_{\cat{D}}) \otimes C \simeq R(L(C)) $
    for every object $ C \in \cat{C} $.
    Hence by \cref{2405c8441e}, there is an integer $ k \geq 0 $
    satisfying $ \llangle R(\cat{D})\rrangle^{\star k} = \cat{C} $.
    Combining this with
    $ R(\cat{D}) = R(\llangle S \rrangle^{\star n})
    \subset \llangle R(S) \rrangle^{\star n} $,
    we have $ \llangle R(S) \rrangle^{\star nk} = \cat{C} $.
\end{proof}

\section{Proof of Main Theorem}\label{s_main}

By using the observations made so far,
we obtain the following:

\begin{theorem}\label{main}
    Let $ f \colon Y \to X $ be an h~cover
    between quasicompact separated noetherian schemes.
    Suppose that there is an
    object~$ G \in \Coh(Y) $
    satisfying $ \llangle \Sigma^i G \mid i \in \ZZ
    \rrangle^{\star n} = \QCoh(Y) $ for some integer $ n \geq 0 $.
    Then
    there exists an object $ F \in \Coh(X) $
    and an integer $ m \geq 0 $ such that $ \llangle \Sigma^i F
    \mid i \in \ZZ \rrangle^{\star m} = \QCoh(X) $ holds.
\end{theorem}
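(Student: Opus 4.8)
The plan is to exploit the adjunction between the pullback $ f^* \colon \QCoh(X) \to \QCoh(Y) $ and the pushforward $ f_* \colon \QCoh(Y) \to \QCoh(X) $, which fits squarely into the framework of \cref{060b8c5d40}. Viewing $ f^* $ as a morphism in $ \CAlg(\Cat{Pr}^{\textnormal{St}}) $ with right adjoint $ f_* $, I would take $ \cat{C} = \QCoh(X) $, $ \cat{D} = \QCoh(Y) $, and $ S = \{ \Sigma^i G \mid i \in \ZZ \} $, so that the hypothesis $ \llangle S \rrangle^{\star n} = \QCoh(Y) $ is exactly the assumption on~$ G $. The crucial input is that $ R(\unit_{\cat{D}}) = f_*\shv{O}_Y $ is descendable as a commutative algebra object of $ \QCoh(X) $; this is precisely \cref{bs} of Bhatt--Scholze, which applies because an h~cover is of finite type.

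To invoke \cref{060b8c5d40} I must verify its remaining hypotheses for the pair $ (f^*, f_*) $. Since $ X $ and $ Y $ are quasicompact separated noetherian and $ f $ is of finite type, the morphism $ f $ is quasicompact and quasiseparated; hence $ f_* $ preserves small colimits and the projection formula $ C \otimes f_*(D) \simeq f_*(f^*C \otimes D) $ holds. Applying \cref{060b8c5d40} then yields an integer $ m \geq 0 $ with $ \llangle R(S) \rrangle^{\star m} = \QCoh(X) $, where $ R(S) = \{ \Sigma^i f_*G \mid i \in \ZZ \} $ because $ f_* $ commutes with the shift functor. In other words, $ \llangle \Sigma^i f_*G \mid i \in \ZZ \rrangle^{\star m} = \QCoh(X) $.

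The remaining issue --- and the one that makes the material of \cref{s_bounded} indispensable --- is that $ f_*G $ need not lie in $ \Coh(X) $, since $ f $ is merely an h~cover rather than a proper morphism. To repair this I would appeal to coherent boundedness: by \cref{d310da1c37} the finite-type morphism $ f $ between separated noetherian schemes is coherently bounded, so there exist $ F \in \Coh(X) $ and an integer $ p \geq 0 $ with $ f_*G \in \llangle F \rrangle^{\star p} $. Applying $ \Sigma^i $ gives $ \Sigma^i f_*G \in \llangle \Sigma^i F \rrangle^{\star p} \subset \llangle \Sigma^j F \mid j \in \ZZ \rrangle^{\star p} $ for every~$ i $. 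Since the latter collection is big closed, it follows that $ \llangle \Sigma^i f_*G \mid i \in \ZZ \rrangle \subset \llangle \Sigma^j F \mid j \in \ZZ \rrangle^{\star p} $, and by the monotonicity and associativity of~$ \star $ (\cref{9d532cb5b6}) that
\[
    \QCoh(X) = \llangle \Sigma^i f_*G \mid i \in \ZZ \rrangle^{\star m} \subset \llangle \Sigma^j F \mid j \in \ZZ \rrangle^{\star pm}.
\]
Thus $ F \in \Coh(X) $ and the integer $ pm $ witness the desired conclusion. I expect the only genuinely delicate point to be this last passage from the big object $ f_*G $ to a coherent~$ F $: the descendability argument inherently produces a possibly non-coherent generator on the base, and it is exactly coherent boundedness that guarantees such a generator is controlled by a bounded coherent complex.
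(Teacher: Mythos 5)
Your proposal is correct and follows essentially the same route as the paper: it combines \cref{bs} with \cref{060b8c5d40} to get $\llangle \Sigma^i f_*G \mid i \in \ZZ \rrangle^{\star m} = \QCoh(X)$, and then uses coherent boundedness (\cref{d310da1c37}) to replace $f_*G$ by a coherent object~$F$, exactly as in the paper's proof (which merely performs the two steps in the opposite order). The additional verifications you spell out --- that $f_*$ preserves colimits, satisfies the projection formula, and commutes with shifts --- are correct and are left implicit in the paper.
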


\begin{proof}
    By \cref{d310da1c37},
    we can take an object $ F \in \Coh(X) $
    and an integer $ k \geq 0 $ such that
    $ f_*G \in \llangle F \rrangle^{\star k} $ holds.
    On the other hand, by \cref{bs}, we can apply \cref{060b8c5d40}
    to see that there is an integer $ l \geq 0 $
    satisfying $ \llangle \Sigma^i f_*G \mid i \in \ZZ \rrangle^{\star l}
    = \QCoh(X) $.
    From these observations,
    we have $\llangle \Sigma^i F \mid i \in \ZZ \rrangle^{\star kl}
    = \QCoh(X) $.
\end{proof}

We conclude this paper by proving
Main Theorem, which we stated in \cref{s_intro}.

\begin{proof}[Proof of Main Theorem]
    Gabber's weak local uniformization theorem says
    that there exists an h~cover $ Y \to X $
    where $ Y $ is the spectrum of a regular ring,
    which automatically has finite (global) dimension;
    see~\autocite{IllusieLaszloOrgogozo14} for a proof.
    Hence from \cref{kelly} we can apply \cref{main}
    to see that there exists an object $ F \in \Coh(X) $
    satisfying $ \llangle \Sigma^i F \mid i \in \ZZ \rrangle^{\star n} = \Coh(X) $
    for some integer~$ n \geq 0 $. According to \cref{neeman},
    this implies the desired result.
\end{proof}

\bibliographystyle{spmpsci}
\bibliography{references.bib}

\end{document}